\newtheorem{theorem}{Theorem}[section]
\newtheorem{proposition}[theorem]{Proposition}
\theoremstyle{definition}
\newtheorem{definition}[theorem]{\bf Definition}
\newcommand{\darrow}{\!\downarrow}
\newcommand{\uarrow}{\!\uparrow}
\newcommand{\la}{\langle}
\newcommand{\ra}{\rangle}
\newcommand{\bigset}[1]{\big\{ #1 \big\}}
\renewcommand{\leq}{\leqslant}
\renewcommand{\geq}{\geqslant}
\newcommand{\fa}{\forall}
\newcommand{\ex}{\exists}
\newcommand{\vph}{\varphi}
\newcommand{\FPF}{\mathrm{FPF}}
\newenvironment{reqlist}{
\begin{list}{-}
 {
\setlength{\parskip}{0cm}
\setlength{\topsep}{.3cm}
\setlength{\partopsep}{0mm}
\setlength{\rightmargin}{0cm}
\setlength{\listparindent}{0cm}
\setlength{\itemindent}{0cm}
\setlength{\parsep}{0mm}
\setlength{\leftmargin}{2.0cm}
\setlength{\labelsep}{1.0cm}
\setlength{\itemsep}{.35cm}
\settowidth{\labelwidth}{zzzzzzzz}
\setlength{\leftmargin}{\labelwidth+\labelsep}
 }
}
{\end{list}}
\begin{document}

\title[The noneffectivity of Arslanov's completeness criterion]
{The noneffectivity of Arslanov's completeness criterion 
and related theorems}

\author[S. A. Terwijn]{Sebastiaan A. Terwijn}
\address[Sebastiaan A. Terwijn]{Radboud University Nijmegen\\
Department of Mathematics\\
P.O. Box 9010, 6500 GL Nijmegen, the Netherlands.
} \email{terwijn@math.ru.nl}

\begin{abstract}
We discuss the (non)effectivity of Arslanov's completeness criterion.
In particular, we show that a parameterized version, similar to the 
recursion theorem with parameters, fails. 
We also discuss the parameterized version of another extension of the 
recursion theorem, namely Visser's ADN theorem. 
\end{abstract}

\subjclass[2010]{%
03D25, 
03D28, 
03B40. 
}

\date{\today}

\maketitle

\section{Introduction}

Kleene's recursion theorem \cite{Kleene} states that every 
computable operation on codes of partial computable functions 
has a fixed point. 
That is, for every computable function~$f$ there exists a number $e$ 
such that $\vph_{f(e)} = \vph_e$. 
Here $\vph_e$ denotes the $e$-th partial computable function. 
Kleene also proved a version of this theorem with parameters:

\begin{theorem} {\rm (The recursion theorem with parameters,
Kleene~\cite{Kleene})} \label{recthmparam}
Let $h(n,x)$ be a computable binary function.
Then there exists a computable function $f$ such that for all~$n$,
$\vph_{f(n)} = \vph_{h(n,f(n))}$.
\end{theorem}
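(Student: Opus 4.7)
\medskip
\noindent\textbf{Proof proposal.} The plan is to derive the parameterized version from the plain recursion theorem by one use of the $s$-$m$-$n$ theorem, arranging the diagonalization so that the parameter $n$ plays no role in the fixed point itself.

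First, starting from the given computable binary $h$, consider the partial computable function
\[
\psi(e,n,x) \;\simeq\; \vph_{h(n,\vph_e(n))}(x),
\]
which is defined whenever $\vph_e(n){\darrow}$ and $\vph_{h(n,\vph_e(n))}(x){\darrow}$. By the $s$-$m$-$n$ theorem there is a total computable function $s(e,n)$ such that $\vph_{s(e,n)}(x) = \psi(e,n,x)$ for all $e,n,x$. Applying $s$-$m$-$n$ once more to $s$ (viewed as a partial computable function of $(e,n)$), we obtain a total computable function $\sigma$ with $\vph_{\sigma(e)}(n) = s(e,n)$ for all $e,n$; in particular each $\vph_{\sigma(e)}$ is total.

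Second, I apply Kleene's (non-parameterized) recursion theorem to $\sigma$ to obtain an index $e^*$ with $\vph_{e^*} = \vph_{\sigma(e^*)}$. Because $\vph_{\sigma(e^*)}$ is total, so is $\vph_{e^*}$, and
\[
\vph_{e^*}(n) \;=\; \vph_{\sigma(e^*)}(n) \;=\; s(e^*,n)
\]
for every $n$. I then define $f(n) := s(e^*,n)$. Since $s$ is total computable, $f$ is total computable.

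Finally, unwinding the definitions,
\[
\vph_{f(n)} \;=\; \vph_{s(e^*,n)} \;=\; \vph_{h(n,\vph_{e^*}(n))} \;=\; \vph_{h(n,f(n))},
\]
as required. The only real step to be careful about is the second one: choosing the fixed point $e^*$ for the single-variable function $\sigma$ \emph{before} specializing to any particular $n$, so that the same index $e^*$ works uniformly across all parameters. Once the $s$-$m$-$n$ bookkeeping is set up as above, this is immediate from the standard recursion theorem, so there is no substantive obstacle.
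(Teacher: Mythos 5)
Your proof is correct. Note that the paper itself gives no proof of this theorem (it is quoted from Kleene), so there is nothing to compare against line by line; your argument is the standard reduction of the parameterized recursion theorem to the parameter-free one: use $s$-$m$-$n$ to build a total computable $\sigma$ with $\vph_{\sigma(e)}(n)=s(e,n)$, take a fixed point $e^*$ of $\sigma$ once and for all, and set $f(n)=s(e^*,n)$, so that $\vph_{f(n)}=\vph_{h(n,\vph_{e^*}(n))}=\vph_{h(n,f(n))}$ uniformly in~$n$. The one point that needed care --- fixing $e^*$ before specializing $n$, so that a single index serves all parameters --- is handled correctly.
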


This result shows that the recursion theorem is effective, in the 
sense that the fixed points of a computable sequence of functions
can be found in a uniformly computable way. 

The recursion theorem has been extended in several ways. 
We refer the reader to Soare~\cite{Soare} for a general discussion.
In this paper we discuss the effectivity of two extensions, 
namely Arslanov's completeness criterion 
(sections~\ref{sec:Arslanov} and~\ref{sec:Arslanovparam}) and 
Visser's ADN theorem (sections~\ref{sec:ADN} and~\ref{sec:ADNparam}).
In particular we show that the parameterized versions of these 
extensions, analogous to Theorem~\ref{recthmparam}, fail.

Our notation from computability theory is mostly standard.
Partial computable (p.c.) functions are denoted by lower case
Greek letters, and (total) computable functions by lower case
Roman letters.
$\omega$ denotes the natural numbers,
$\vph_e$ denotes the $e$-th p.c.\ function,
and $W_e$ denotes the domain of $\vph_e$.
We write $\vph_e(n)\darrow$ if this computation is defined,
and $\vph_e(n)\uarrow$ otherwise.
$\emptyset'$ denotes the halting set.
For unexplained notions we refer to Odifreddi~\cite{Odifreddi} or
Soare~\cite{Soare}.

In the discussion below we will use the following notions from 
the literature:
\begin{enumerate}[\rm (i)]

\item[$\bullet$] 
A function $f$ is called {\em fixed point free}, or simply FPF, 
if $W_{f(n)}\neq W_n$ for every~$n$.
We will also use this terminology for partial functions, 
see Definition~\ref{def} below. 

\item[$\bullet$] 
A function $g$ is called {\em diagonally noncomputable\/}, or DNC, 
if $g(e)\neq \vph_e(e)$ for every~$e$.

\end{enumerate}
Though the notions of FPF and DNC function are different, 
it is well-known that they coincide on Turing degrees, 
cf.\ Jockusch et al.~\cite{Jockuschetal}. 
Namely, a set computes a FPF function if and only if it 
computes a DNC function. 
Moreover, this is also equivalent to computing a function 
$f$ such that $\vph_{f(e)}\neq \vph_e$ for every~$e$.

DNC functions played an important role in Ku\v{c}era's 
alternative solution to Post's problem~\cite{Kucera}. 
In the paper by Kjos-Hanssen, Merkle, and Stephan~\cite{KHMS},
the notion of DNC function is linked to sets with 
high initial segment Kolmogorov complexity.

\section{Arslanov's completeness criterion} \label{sec:Arslanov}

By the recursion theorem, and the equivalence quoted above, 
no FPF function is computable.   
It is easy to see that the halting set $\emptyset'$ computes a FPF 
function, as $\emptyset'$ can list all computable functions. 
However, by the low basis theorem \cite{JockuschSoare},
there also exist FPF functions of low degree.
The next result shows that FPF functions cannot have incomplete c.e.\ 
degree. (On the other hand, by Ku\v{c}era~\cite{Kucera}, any FPF degree 
below $\emptyset'$ bounds a noncomputable c.e.\ degree.)
This shows that the recursion theorem can be extended from 
computable functions to functions bounded by an incomplete c.e.\ degree. 

\begin{theorem} {\rm (Arslanov completeness criterion \cite{Arslanov})}
\label{Arslanov}
A c.e.\ set $A$ is Turing complete if and only if 
$A$ computes a $\FPF$ function.
\end{theorem}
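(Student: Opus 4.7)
The plan is to prove each direction separately. The forward direction is routine: if $A \equiv_T \emptyset'$, then using $\emptyset'$ as oracle one can decide $0 \in W_n$ for each $n$, and define $f(n)$ to be an index for $\{0\}$ when $0 \notin W_n$ and for $\emptyset$ otherwise, producing an $A$-computable FPF function. So the content lies in the reverse direction.

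For the reverse direction, assume $A$ is c.e.\ and $f = \Phi^A$ is FPF, with the aim of showing $\emptyset' \leq_T A$. The strategy is a recursion-theoretic diagonalization driven by the enumeration of $\emptyset'$. Using the $s$-$m$-$n$ theorem, for each $e$ I would define a computable function $g(e, n)$ such that the c.e.\ set $W_{g(e,n)}$ is built as follows: do nothing until $e$ enters $\emptyset'$ at some stage $s$; from that point, use the approximation $\Phi^{A_s}(n)$ to pick a target index $m$ and begin copying $W_m$ into $W_{g(e,n)}$, adjusting the target whenever the approximation changes. Applying Theorem~\ref{recthmparam} uniformly in $e$ to (an index of) $g$, I obtain a computable function $n(\cdot)$ with $W_{n(e)} = W_{g(e, n(e))}$ for every~$e$.

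The dichotomy is then: if $e \notin \emptyset'$, the construction never fires and $W_{n(e)} = \emptyset$; if $e \in \emptyset'$, the $A_s$-approximation $\Phi^{A_s}(n(e))$ eventually settles on the true value $f(n(e))$, and the construction is arranged to force $W_{n(e)} = W_{f(n(e))}$, contradicting that $f$ is FPF. Consequently, $e \in \emptyset'$ can be decided from $A$: compute $f(n(e))$ via $\Phi^A$, and check whether the (necessarily finite) discrepancy between $W_{n(e)}$ and $W_{f(n(e))}$ would have been manufactured by a $\emptyset'$-enumeration of $e$, an $A$-computable condition.

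The main obstacle is the non-monotone behaviour of the approximation $\Phi^{A_s}(n(e))$: it can temporarily take a wrong value $m$, causing spurious elements of $W_m$ to be permanently enumerated into the c.e.\ set $W_{g(e,n(e))}$ before the approximation stabilises, which could prevent the clean equality $W_{n(e)} = W_{f(n(e))}$. I would handle this either (i) by redesigning $g$ so that a target is committed to only after the $A_s$-computation has been stable for sufficiently many stages, so that extraneous targets contribute only finitely many elements that can be absorbed into the final $W_{f(n(e))}$, or (ii) by working with the equivalent (on Turing degrees, as noted in the introduction) notion of $\varphi$-FPF, where the diagonalization $\varphi_{n(e)} \neq \varphi_{f(n(e))}$ is easier to arrange: one lets $\varphi_{g(e,n)}(x) = \varphi_{\Phi^{A_s}(n)}(x)$ for the first $s$ at which $e \in \emptyset'_s$ and the inner computation converges, and uses $A$ to verify convergence to the correct value. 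Either variant yields the reduction $\emptyset' \leq_T A$.
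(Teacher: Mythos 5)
Your forward direction is fine, and your architecture for the hard direction is essentially the paper's (read directly rather than contrapositively): apply the recursion theorem with parameters to a function that, once $e$ enters $\emptyset'$, copies the approximated value of $f$ at its own index. But the core of the argument has a genuine gap. The dichotomy as you state it cannot be right: if the construction really "forced $W_{n(e)} = W_{f(n(e))}$" for every $e\in\emptyset'$, then any single $e\in\emptyset'$ would already yield a fixed point of $f$, and you would have proved that \emph{no} $A$-computable function is FPF for \emph{any} c.e.\ $A$ --- which is false for $A=\emptyset'$. The whole point is that this branch must fail, and the reduction $\emptyset'\leq_T A$ has to be extracted from \emph{how} it fails. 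That extraction is what is missing. (Your design choice of "adjusting the target whenever the approximation changes" also works against you: the clean argument commits once and for all to the approximation at the stage $s_e$ at which $e$ enters $\emptyset'$.)

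The missing device is the modulus function. Since $A$ is c.e.\ and $f\leq_T A$, the computable approximation $\hat f(n,s)$ to $f(n)$ comes with an $A$-computable modulus $m(n)$ such that $\hat f(n,s)=f(n)$ for all $s\geq m(n)$. If one commits at stage $s_e$ to the target $\hat f(n(e),s_e)$, then FPF-ness of $f$ forces $\hat f(n(e),s_e)\neq f(n(e))$ for every $e\in\emptyset'$, hence $s_e<m(n(e))$, hence $e\in\emptyset'$ if and only if $e\in\emptyset'_{m(n(e))}$ --- an $A$-computable test. By contrast, your proposed test ("check whether the discrepancy between $W_{n(e)}$ and $W_{f(n(e))}$ would have been manufactured by a $\emptyset'$-enumeration of $e$") is not an $A$-computable condition as stated: comparing two c.e.\ sets is not something $A$ can do, and no reason is given that the discrepancy is finite or detectable. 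Your repair (i) does not work either, since $W_{f(n(e))}$ is not under your control (spurious elements cannot be "absorbed" into it) and there is no computable signal that the approximation has stabilized; repair (ii) is closer, but the phrase "uses $A$ to verify convergence to the correct value" is precisely the modulus argument left implicit. Making it explicit, together with the one-shot commitment at stage $s_e$, turns your sketch into the paper's proof.
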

\begin{proof} 
Suppose $A$ is c.e.\ and incomplete, and $f\leq_T A$. 
Then $f$ has a computable approximation $\hat f(n,s)$,
and there is an $A$-computable modulus function $m(n)$
such that $\fa s\geq m(n) \big(f(n) = \hat f(n,s)\big)$.
By the recursion theorem with parameters (Theorem~\ref{recthmparam}), 
let $h$ be a computable function such that 
$$
W_{h(n)} = 
\begin{cases}
W_{\hat f(h(n),s_n)} & 
\text{if $n\in\emptyset'$ and $s_n$ is minimal such that $n\in\emptyset'_s$}, \\
\emptyset & \text{otherwise.}
\end{cases}
$$
Then there exists $n\in\emptyset'$ such that 
$\hat f(h(n),s_n) = f(h(n))$, so that $h(n)$ is a fixed 
point of~$f$. Namely, if this were not the case, then we would have 
that for all~$n$, if $n\in\emptyset'$, then $m(h(n))>s_n$, and hence 
$n\in\emptyset'_{m(h(n))}$. Thus we would have $\emptyset'\leq_T A$, 
contrary to assumption. 
\end{proof}

The proof given here is basically the contrapositive of 
the proof in Soare~\cite{Soare}.
The proof above already suggests that the result is not effective:
It does not give a fixed point effectively, but merely 
produces a c.e.\ set, namely $\bigset{h(n)\mid n\in\emptyset'}$, 
at least one of the elements of which is a fixed point. 
That this is necessarily so follows from the result in 
the next section.

\section{The failure of Arslanov's completeness criterion with parameters}
\label{sec:Arslanovparam}

Let $h$ be a computable function of two arguments. 
Since for every fixed $n$ the function $h(n,x)$ is a computable 
function of~$x$, by the recursion theorem we have 
$$
\fa n \ex x \; \vph_x = \vph_{h(n,x)}.
$$
When we Skolemize this formula we obtain:
$$
\ex f \fa n \; \vph_{f(n)} = \vph_{h(n,f(n))}.
$$ 
The recursion theorem with parameters tells us that we can 
take $f$ {\em computable\/} here. In other words, the 
recursion theorem holds uniformly.

Now consider the Arslanov completeness criterion. 
Let $A$ be an incomplete c.e.\ set, and let $h\leq_T A$
be a binary function. By Theorem~\ref{Arslanov} we have 
$$
\fa n \ex x \; \vph_x = \vph_{h(n,x)}
$$
and Skolemization gives 
$$
\ex f \fa n \; \vph_{f(n)} = \vph_{h(n,f(n))}.
$$ 
We prove that in general we cannot take $f$ computable in this case. 
This even fails when $A$ is of low Turing degree. 
Note that by relativizing the recursion theorem with parameters, 
there always exists an $A$-computable Skolem function~$f$

\begin{theorem} {\rm (Failure of Arslanov with parameters)}
There exist a low c.e.\ set $A$ and an $A$-computable 
binary function $h$ such that for every computable $f$, 
there exists $n$ with 
$$
W_{f(n)} \neq W_{h(n,f(n))}. 
$$
\end{theorem}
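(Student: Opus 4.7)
The plan is to avoid any priority construction entirely, by taking $A$ to be any low noncomputable c.e.\ set (existence standard, e.g.\ a low simple set) and designing $h$ so that any computable Skolem function for $h$ would directly decide $A$. Concretely, using the $S$-$m$-$n$ theorem I would fix computable functions $k_0, k_1$ with
$$
W_{k_0(e,x)} = \{x\}, \qquad W_{k_1(e,x)} = \{x+1\},
$$
and then set $h(e,x) = k_0(e,x)$ if $e\notin A$ and $h(e,x) = k_1(e,x)$ if $e\in A$. Computing $h(e,x)$ requires a single query to the oracle $A$ followed by a computable evaluation, so $h\leq_T A$; the fact that $W_{h(e,x)}$ is always a singleton is the design feature that will make the diagonalization work.

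Next I would assume for contradiction that some total computable $f$ is a Skolem function, i.e.\ satisfies $W_{f(n)} = W_{h(n,f(n))}$ for every $n$. Instantiating at $n=e$ forces $W_{f(e)} = \{f(e)\}$ when $e\notin A$ and $W_{f(e)} = \{f(e)+1\}$ when $e\in A$. In either case $W_{f(e)}$ is a nonempty singleton, and its unique element reveals the value of $A(e)$: the procedure ``compute $f(e)$, enumerate $W_{f(e)}$ until its sole element $y$ appears, and output $1$ iff $y=f(e)+1$'' is total computable and equals the characteristic function of $A$. This contradicts the choice of $A$ as noncomputable, so no computable Skolem function exists; equivalently, for every computable $f$ there is some $n$ with $W_{f(n)}\neq W_{h(n,f(n))}$, as required.

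The only ingredient imported from outside this short argument is the existence of a low noncomputable c.e.\ set, which is classical, so I do not anticipate a genuine obstacle. The one conceptual point to be careful about is that the argument relies on enumeration of $W_{f(e)}$ terminating, which is guaranteed precisely because the Skolem hypothesis forces $W_{f(e)}$ to be a nonempty singleton; without this constraint on the structure of $W_{h(e,x)}$ the decoding of $A(e)$ from $f$ would not be effective, so the whole force of the proof lies in making each candidate fixed-point set a singleton of one of two easily distinguishable shapes.
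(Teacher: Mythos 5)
Your proof is correct, and it takes a genuinely different and more economical route than the paper's. The paper builds $A$ and $h$ simultaneously by a finite injury construction: requirements $R_e$ defeat each candidate $f=\{e\}$ dynamically by toggling $W_{h(n,f(n))}$ between empty and nonempty (coding the toggles into two bits of $A$ per witness), interleaved with lowness requirements $L_e$. You instead take $A$ off the shelf as any noncomputable low c.e.\ set and code the single bit $A(e)$ into the \emph{shape} of the target set, $\{x\}$ versus $\{x+1\}$; a computable Skolem function $f$ would then force $W_{f(e)}=W_{h(e,f(e))}$ to be one of these two distinct nonempty singletons, so enumerating $W_{f(e)}$ until its unique element appears is a terminating procedure that reads $A(e)$ back off, contradicting the noncomputability of $A$. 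That decoding step is the only delicate point and you have it exactly right. What your argument buys is modularity: all the priority-method work is outsourced to the classical existence of a low noncomputable c.e.\ set, and the diagonalization in fact shows that \emph{every} noncomputable c.e.\ set $A$ admits such an $h$ (each $h(e,\cdot)$ is even one of two fixed total computable functions, so the failure of uniformity is purely about not knowing which one). The paper's construction is self-contained and exhibits the dynamics of the non-uniformity directly; both constructions make $h$ a bounded truth-table reduction to $A$, using one oracle bit per value in your case and two in the paper's.
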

\begin{proof}
We build $A$ c.e.\ and $h\leq_T A$ total using a 
finite injury construction. 
The requirements for the construction are:

\begin{reqlist}

\item[$R_e:$] $f=\{e\}$ is total \; $\Longrightarrow$ \; 
$\ex n \; W_{f(n)} \neq W_{h(n,f(n))}$,

\item[$L_e:$] $\ex^\infty s \; \{e\}^{A_s}_s(e)\darrow \;\Longrightarrow\; 
\{e\}^A(e)\darrow$.

\end{reqlist}
The requirements $L_e$ guarantee that $A$ is low (cf.\ Soare~\cite{Soare}), 
and clearly the requirements $R_e$ are sufficient to prove the theorem.  
We give the requirements the following priority ordering:
$$
L_0 > R_0 > L_1 > R_1 > L_2 > \ldots
$$
To satisfy $L_e$ we do not have to enumerate anything into $A$, 
we only maintain a restraint function $r(e,s)$ to preserve computations 
in the usual way. 
Let us consider the strategy for $R_e$ in isolation. Suppose we have 
picked $n$ as a potential witness for $R_e$. 

{\em Step 1.} 
Suppose we see at stage $s$ such that $f(n) = \{e\}_s(n)\darrow$. 

If $W_{f(n),s}\neq\emptyset$ we let $W_{h(n,f(n))}=\emptyset$, 
thus satisfying $R_e$ forever. 

If $W_{f(n),s}=\emptyset$ we let $W_{h(n,f(n))}\neq\emptyset$. 

{\em Step 2.} Suppose that at a later stage $t>s$ we see 
$W_{f(n),t} = W_{h(n,f(n))}\neq\emptyset$. Now we {\em change\/} 
$h(n,f(n))$ so that $W_{h(n,f(n))}=\emptyset$ by changing $A$
below the use of~$h$. 

Since the definition of $W_{h(n,f(n))}$ needs to be adapted at most twice
(from empty to nonempty to empty), 
we can get by by letting $h$ use only two bits of $A$. We define $h$ as follows. 
We use a standard computable pairing function $\la\cdot,\cdot\ra$ to denote 
coded pairs and triples. 
For ease of notation, we write $A(x,y,z)$ instead of $A(\la x,y,z\ra)$. 
We let $h$ be an $A$-computable function such that
\begin{eqnarray*}
W_{h(n,x)}=\emptyset &\Longleftrightarrow &A(n,x,0)=A(n,x,1),\\
W_{h(n,x)} \neq \emptyset &\Longleftrightarrow &A(n,x,0)\neq A(n,x,1).
\end{eqnarray*}
Clearly such a function $h$ can be defined from $A$. (As the 
computation of $h(n,x)$ uses only two bits from~$A$, this is 
even a btt-reduction.) 

We construct $A$ in stages. 
$L_e$ requires attention at stage $s$ if $e<s$, 
$\{e\}^{A_s}_s(e)\darrow$, and $r(e,s)=0$. 
(This means that a restraint should be set to preserve the computation.)
$R_e$ requires attention at stage $s$ if $e<s$ and one of the following holds:
\begin{enumerate}[(a)]

\item $R_e$ does not have a witness at stage $s$, that is, $n_{e,s}$ is undefined. 
Required action in this case: pick $n$ larger than all current restraints 
$r(i,s)$, $i\leq e$, and also different from all other witnesses $n_{i,s}$ 
that are currently defined, and let $n_{e,s+1}=n$. 

\item $n=n_{e,s}$ is defined, $f(n)=\{e\}_s(n)\darrow$, and one of the 
following subcases applies:
\begin{enumerate}[(b.1)]

\item $W_{f(n),s}=\emptyset$ and $A_s(n,f(n),0)= A_s(n,f(n),1)=0$. 
Required action: Define $A(n,f(n),0)=1$.

\item $W_{f(n),s}\neq\emptyset$, $A_s(n,f(n),0)=1$, and $A_s(n,f(n),1)=0$. 
Required action: Define $A(n,f(n),1)=1$.
\end{enumerate} 
Also, if either $A(n,f(n),0)=1$ or $A(n,f(n),1)=1$ is set at stage~$s$, 
we define \mbox{$r(i,s+1)$} $=$ $0$ 
for all $i>e$.\footnote{That is, if $R_e$ enumerates an element into $A$, 
we drop the restraints of all lower priority requirements $L_i$. 
This is overkill since the action may not actually injure all of 
these, but it is just as easy.}
\end{enumerate} 

{\em Construction.} Initially $A$ is empty: $A_0=\emptyset$. 
At stage $s>0$, pick the highest priority requirement $R_e$ or $L_e$, 
if any, that requires attention. If there is none, proceed to the 
next stage. 
If $L_e$ is picked, set \mbox{$r(e,s+1)$} equal to the use 
of $\{e\}^{A_s}_s(e)$ (this computation converges since $L_e$ 
requires attention). Also, initialize all lower priority $R_i$ 
by letting all witnesses $n_{i,s+1}$ with $i\geq e$ be undefined, 
and proceed to the next stage. 
If $R_e$ is picked, perform the actions indicated above 
under (a) and~(b).
This concludes the construction of $A=\bigcup_s A_s$. 

{\em Verification.} 
We verify that all requirements are satisfied. 
For $L_e$, note that the only requirements that can injure it are 
the $R_i$ with $i<e$, and by induction each of these enumerates at 
most finitely many numbers into~$A$, so $L_e$ is injured at most 
finitely often, and hence is eventually satisfied. 

For $R_e$, suppose that $f=\{e\}$ is total. 
By induction, assume that no higher priority requirement $L_i$
or $R_i$ requires attention after stage~$t$. 
Let $r$ be the maximum of all higher priority restraints:
$$
r = \max_{i\leq e} \lim_{s\rightarrow\infty} r(i,s).
$$
Note that since by assumption every $L_i$, $i\leq e$, acts only 
finitely often, this limit exists and is finite. 
By the construction and (a) above, at some stage $s$ after 
the last stage that a requirement $L_i$ with $i\leq e$ acts, 
$n = n_{e,s} >r$ is defined, which is then never redefined later. 
We have the following cases. 

If $W_{f(n)}=\emptyset$, then $R_e$ acts exactly once after 
the stage~$s$ where $n$ is defined, 
the clause (b.1) applies at that stage, 
and we have $A(n,f(n),0)=1$ and $A(n,f(n),1)=0$. 
Hence $W_{h(n,f(n))}\neq\emptyset$, and $R_e$ is satisfied.

If $W_{f(n)}\neq\emptyset$ then we have two subcases:  
\begin{enumerate}[\rm (i)]

\item[$\bullet$] 
After the stage $s$ where $n$ is defined, 
$R_e$ never requires attention.
In this case we have $A(n,f(n),0)=A(n,f(n),1)=0$, 
hence $W_{h(n,f(n))}=\emptyset$, and $R_e$ is satisfied.

\item[$\bullet$] 
In the opposite case, $R_e$ does require attention after
stage~$s$. In this case, $R_e$ will act precisely twice 
after stage~$s$. 
The first time, at stage $s'$ say, since $A_s(n,f(n),0)=0$
we will have $W_{f(n),s'}=\emptyset$ 
(for otherwise $R_e$ would not require attention)
and case (b.1) will apply. 
The second time will occur at a stage $s''>s'$ that is 
large enough to see that $W_{f(n),s''}\neq\emptyset$. 
Since now $A_{s''}(n,f(n),0)=1$, case (b.2) applies, 
and we will have $A(n,f(n),0)=A(n,f(n),1)=1$.
Hence $W_{h(n,f(n))}=\emptyset$, and $R_e$ is satisfied.

\end{enumerate}
So we see that $R_e$ acts at most twice after the last 
time it is initialized, and is eventually satisfied. 
\end{proof}

\section{The ADN theorem} \label{sec:ADN}

It is well-known that Kleene found the recursion theorem by 
studying the $\lambda$-calculus. Also motivated by the $\lambda$-calculus, 
arithmetic provability, and the theory of numerations,
Visser~\cite{Visser} proved the following generalization of the
recursion theorem. It has interesting applications in the theory
of numerations, see for example Bernardi and Sorbi~\cite{BernardiSorbi}.
ADN theorem stands for ``anti diagonal normalization theorem''.

\begin{definition} 
We extend the definition of FPF function to partial functions. 
We call a partial function $\delta$ FPF if it is fixed point 
free on its domain, i.e.\ for every $n$,
\begin{equation}
\delta(n)\darrow \; \Longrightarrow \; W_{\delta(n)}\neq W_n. 
\end{equation}
\end{definition}

\begin{theorem} {\rm (ADN theorem, Visser~\cite{Visser})} \label{ADN}
Suppose that $\delta$ is a partial computable fixed point free
function. Then for every partial computable function $\psi$ there
exists a computable function $f$ such that for every $n$,
\begin{align}
\psi(n)\darrow \; &\Longrightarrow \; W_{f(n)}= W_{\psi(n)} \label{totalize} \\
\psi(n)\uarrow \; &\Longrightarrow \; \delta(f(n))\uarrow   \label{avoid}
\end{align}
\end{theorem}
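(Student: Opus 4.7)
The plan is to apply the recursion theorem with parameters (Theorem~\ref{recthmparam}) to a binary computable function $h$ designed so that the fixed point freeness of $\delta$ forces the resulting $f$ to behave as required.

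I would define $h(n,x)$, via the $s$-$m$-$n$ theorem, so that $W_{h(n,x)}$ is enumerated by running the computations of $\psi(n)$ and $\delta(x)$ in parallel stage by stage, and committing as follows: if $\delta(x)$ converges at a stage strictly earlier than $\psi(n)$ does (including the case $\psi(n)\uarrow$), then declare $W_{h(n,x)} = W_{\delta(x)}$; otherwise declare $W_{h(n,x)} = W_{\psi(n)}$. Such an $h$ is total computable. By Theorem~\ref{recthmparam} there is a computable function $f$ with $\vph_{f(n)} = \vph_{h(n,f(n))}$, and in particular $W_{f(n)} = W_{h(n,f(n))}$, for every~$n$.

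The verification uses the FPF property of $\delta$ twice. Suppose first that $\psi(n)\darrow$. If $\delta(f(n))$ were to converge at a stage strictly before $\psi(n)$, then by construction $W_{f(n)} = W_{h(n,f(n))} = W_{\delta(f(n))}$, contradicting FPF. Hence either $\delta(f(n))\uarrow$, or $\psi(n)$ wins the race; in either case $W_{f(n)} = W_{\psi(n)}$, giving~(\ref{totalize}). Now suppose $\psi(n)\uarrow$; if $\delta(f(n))\darrow$, then $\delta$ trivially wins the race, so $W_{f(n)} = W_{\delta(f(n))}$, once more contradicting FPF. Thus $\delta(f(n))\uarrow$, giving~(\ref{avoid}).

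The main point to be careful about is arranging $h$ so that the potentially bad case---$\delta(f(n))$ converging before $\psi(n)$---is ruled out by an appeal to FPF rather than silently producing a fixed point of~$\delta$. The construction above achieves this by declaring $W_{h(n,x)}=W_{\delta(x)}$ \emph{precisely} when $\delta(x)$ wins the race, so the recursion theorem equation $W_{f(n)} = W_{h(n,f(n))}$ becomes the forbidden identity $W_{f(n)} = W_{\delta(f(n))}$ exactly in that case.
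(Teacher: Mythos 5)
Your proof is correct, and it follows the standard route: the paper itself states the ADN theorem without proof, but points to a derivation from the recursion theorem with parameters in \cite{Terwijn}, and your race construction (with ties and the divergent case awarded to $\psi$, so that $\delta$ winning forces the forbidden identity $W_{f(n)}=W_{\delta(f(n))}$) is essentially that argument. The only cosmetic point is that when both $\psi(n)$ and $\delta(x)$ diverge one should say $W_{h(n,x)}=\emptyset$ rather than $W_{\psi(n)}$, which is harmless since neither implication is then at issue beyond $\delta(f(n))\uarrow$.
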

If \eqref{totalize} holds for every~$n$, we say that 
$f$ {\em totalizes\/} $\psi$, 
and if in addition \eqref{avoid} holds, we say that 
$f$ {\em totalizes\/} $\psi$ {\em avoiding\/}~$\delta$.

Just as the Arslanov completeness criterion extends the 
recursion theorem from computable functions to functions 
computable from any incomplete c.e.\ degree, 
Theorem~\ref{ADN} can be extended to such degrees. 
This gives the following joint generalization of the 
ADN theorem and the Arslanov completeness criterion:

\begin{theorem} {\rm (Joint generalization \cite{Terwijn})} \label{joint}
Suppose $A$ is a c.e.\ set such that $A <_T \emptyset'$.
Suppose that $\delta$ is a partial $A$-computable $\FPF$ function. 
Then for every partial computable function $\psi$ there
exists a computable function $f$ totalizing $\psi$ avoiding $\delta$,
i.e.\ such that for every~$n$ \eqref{totalize} and \eqref{avoid} 
above hold.
\end{theorem}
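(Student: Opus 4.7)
The plan is to adapt the proof of Visser's ADN theorem (Theorem~\ref{ADN}) by passing to an $A$-based approximation of~$\delta$, and to use the hypothesis $A<_T\emptyset'$ in an Arslanov-style way as in the proof of Theorem~\ref{Arslanov}. Fix a partial computable functional $\Phi$ with $\delta=\Phi^A$, and set $\hat\delta(x,s):=\Phi^{A_s}_s(x)$. Then $\delta(x)=\lim_s\hat\delta(x,s)$ whenever $\delta(x)\darrow$, with an $A$-computable modulus $m$.

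First I would use the $s$-$m$-$n$ theorem together with the recursion theorem with parameters (Theorem~\ref{recthmparam}) to build a computable binary function $g(n,k)$ such that $W_{g(n,k)}$ is enumerated by a ``delayed race'' starting at stage~$k$: at each stage $s\geq k$ we look for the first of $\psi(n)\darrow$ or $\hat\delta(g(n,k),s)\darrow$, and upon the first such event we commit, enumerating from that stage onward either $W_{\psi(n)}$ (in the first case) or $W_{\hat\delta(g(n,k),s)}$ (in the second) into $W_{g(n,k)}$. Nothing is enumerated before the commitment. I would then set $f(n):=g(n,k(n))$ for a computable~$k$ to be determined.

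The verification, modeled on the ADN proof, shows that a commit to $\hat\delta$ at any stage $s\geq m(g(n,k(n)))$ would produce a fixed point of~$\delta$, since past the modulus $\hat\delta(g(n,k(n)),s)=\delta(g(n,k(n)))$, and the enumerated $W_{g(n,k(n))}$ would then equal $W_{\delta(g(n,k(n)))}$, contradicting the FPF property of~$\delta$. Hence any commit to $\hat\delta$ must be ``premature'', at a stage $s<m(g(n,k(n)))$. The benign outcomes are then (i) $\psi(n)\darrow$ and $\psi$ wins the race, giving $W_{f(n)}=W_{\psi(n)}$, or (ii) $\psi(n)\uarrow$ and neither side ever commits, forcing $\delta(f(n))\uarrow$ (else $\hat\delta$ would commit past its modulus).

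The main obstacle—and the point at which the hypothesis $A<_T\emptyset'$ must be used essentially—is to rule out the premature-commit case by a computable choice of~$k$. The intended argument is an Arslanov-style contradiction: if no computable choice of~$k(n)$ avoids premature commits for every~$n$, then the locations of those commits, together with the $A$-computable modulus~$m$, yield an $A$-computable decision procedure for~$\emptyset'$, contradicting $A<_T\emptyset'$. Making this reduction precise—coordinating the parameter $k(n)$ with~$m$ so that any residual failure genuinely encodes a bit of $\emptyset'$ readable from~$A$—is the technical crux of the proof.
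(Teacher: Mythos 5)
You have correctly reduced the problem to ruling out ``premature'' commits, but the step you defer as the technical crux is the entire content of the theorem, and the mechanism you propose for it cannot work as set up. In the proof of Theorem~\ref{Arslanov}, the reduction $\emptyset'\leq_T A$ is extractable from a putative failure only because $\emptyset'$ is wired into the construction: the stage at which $W_{h(n)}$ commits to a value of the approximation is \emph{by definition} the stage $s_n$ at which $n$ enters $\emptyset'$, so ``every commit happens before the modulus'' literally becomes ``$n\in\emptyset'\Rightarrow n\in\emptyset'_{m(h(n))}$'', which $A$ decides. In your construction $\emptyset'$ appears nowhere: the commit stage of $g(n,k)$ is dictated by when $\hat\delta(g(n,k),\cdot)$ happens to look convergent, and the hypothesis ``no computable $k$ avoids premature commits'' is a statement about the interaction of the approximation with the indices $g(n,k)$ from which no decision procedure for $\emptyset'$ can be read off. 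Moreover, that hypothesis quantifies over \emph{all} computable functions $k$, whereas an Arslanov-style argument needs a single fixed construction whose failure yields the reduction.

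There is also a problem one level earlier: the delay parameter does not buy anything even non-uniformly. Changing $k$ changes the index $g(n,k)$, and for an argument $x$ outside the domain of $\delta$ the approximation $\Phi^{A_s}_s(x)$ may appear convergent at arbitrarily late stages (with growing uses that are later injured), so no choice of starting stage excludes a false commit; and when $\delta(g(n,k))\darrow$ the modulus $m(g(n,k))$ itself moves with $k$, so you cannot ``start past the modulus''. Hence it is not even clear that a good $k$ exists for each fixed $n$, let alone computably in $n$. Note that the paper itself gives no proof of Theorem~\ref{joint} but defers to \cite{Terwijn}; what is needed, and what your sketch is missing, is to make the $\delta$-branch of the race fire only when a designated number enters $\emptyset'$ (exactly as in the proof of Theorem~\ref{Arslanov}), so that a failure of \eqref{avoid} or \eqref{totalize} forces commits to occur past the $A$-computable modulus, producing a genuine fixed point of $\delta$ and contradicting the $\FPF$ hypothesis.
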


Note that Theorem~\ref{joint} implies Theorem~\ref{Arslanov}, 
because if $\delta$ were total then \eqref{avoid} could not 
hold. Hence no total FPF function of incomplete c.e.\ degree 
can exist. 

Thus we have the picture of generalizations of the 
recursion theorem from Figure~\ref{fig}. 
\begin{figure}[t] 
\begin{center} 
\setlength{\unitlength}{1mm}
\begin{picture}(24, 35)
\put(12,0){\makebox[0cm][c]{recursion theorem}} 
\put(-6,15){\makebox[-.5cm][c]{ADN theorem}}
\put(-6,20){\line(2,1){14}}   
\put(-6,12.5){\line(2,-1){14}}
\put(30,15){\makebox[.5cm][c]{Arslanov}}
\put(30,20){\line(-2,1){14}}
\put(30,12.5){\line(-2,-1){14}}
\put(12,30){\makebox[0cm][c]{Theorem~\ref{joint}}}
\end{picture}
\end{center}
\caption{Generalizations of the recursion theorem\label{fig}}
\end{figure}
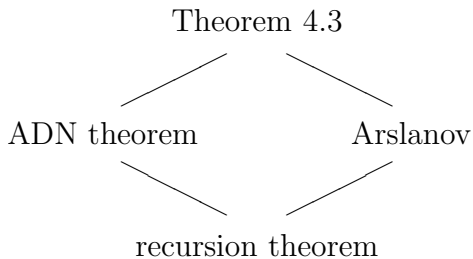
All of these generalizations can be proved using the 
recursion theorem with parameters (Theorem~\ref{recthmparam}). 
This prompts the question whether any of these generalizations
have a parameterized version. The negative answer for 
Arslanov's completeness criterion was already given in 
section~\ref{sec:Arslanovparam}. We discuss the ADN theorem in 
the next section.

\section{The ADN theorem with parameters} \label{sec:ADNparam}

The ADN theorem is uniform in codes of $\psi$, as is easy 
to see, cf.\ \cite{Terwijn}. In fact, one may assume without 
loss of generality that the function $\psi$ is universal. 
Also, from the proof of the ADN theorem from the 
recursion theorem with parameters, given in \cite{Terwijn}, 
it is clear that the code of the function $f$ depends 
effectively on a code for~$\delta$. 
Hence the result is uniform in both $\psi$ and $\delta$.
However, that the result is effective in this sense does not 
mean it has a parameterized version analogous to the 
recursion theorem with parameters. As the ADN theorem is a 
statement about partial FPF functions, and hence in a way 
a contrapositive of the recursion theorem, it is not even 
immediately clear what the statement of the ADN theorem 
with parameters should be. At least it should 
imply Theorem~\ref{recthmparam}.

To formulate the analog of Theorem~\ref{recthmparam}  
for the ADN theorem, we define the following notion.

\begin{definition} \label{def}
A partial binary function $\delta(n,x)$ is $\FPF^+$ if for every 
computable function $g$ there exists $n$ such that either 
$\delta(n,g(n))\uarrow$ or $\vph_{g(n)}\neq \vph_{\delta(n,g(n))}$.
\end{definition}

Note that by negating the property from the definition,  
$\delta$ is {\em not\/} $\FPF^+$ if there exists a computable 
function $g$ such that for every~$n$, $\delta(n,g(n))$ is defined 
and $\vph_{g(n)}=\vph_{\delta(n,g(n))}$. 
This expresses that $g$ uniformly computes fixed points for the 
family of functions $\delta(n,x)$.
By the recursion theorem with parameters, every {\em total\/} computable 
$\delta$ is not $\FPF^+$. 

We can now formulate the analog of the recursion theorem with 
parameters as follows. 

\begin{proposition} {\rm (ADN theorem with parameters)} \label{ADNparam}
Suppose that $\delta$ 
is a binary partial computable $\FPF^+$ function. 
Then for every partial computable function $\psi$ there
exists a computable function $f$ such that for every~$n$,
\begin{align}
\psi(n)\darrow \; &\Longrightarrow \; W_{f(n)}= W_{\psi(n)} \label{totalize2}\\
\psi(n)\uarrow \; &\Longrightarrow \; \delta(n,f(n))\uarrow \label{avoid2}
\end{align}
\end{proposition}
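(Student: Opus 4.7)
The plan is to mimic Visser's proof of the unary ADN theorem (Theorem~\ref{ADN}), replacing the ordinary recursion theorem with Theorem~\ref{recthmparam}. Using the $s$-$m$-$n$ theorem I would define a computable binary function $h$ such that $\vph_{h(n,x)}$ performs a global race between $\psi(n)$ and $\delta(n,x)$: dovetail the two computations, and as soon as one halts commit (uniformly over every input) to copying the $\vph$ of the index it produced; if neither halts, $\vph_{h(n,x)}$ is everywhere undefined. Theorem~\ref{recthmparam} applied to $h$ then supplies a computable $f$ with $\vph_{f(n)} = \vph_{h(n,f(n))}$ for every $n$, and this $f$ is the candidate totalizer.

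Verification proceeds by case analysis at each coordinate $(n,f(n))$. If both $\psi(n)\uarrow$ and $\delta(n,f(n))\uarrow$, then (\ref{totalize2}) is vacuous and (\ref{avoid2}) holds by assumption. If $\psi(n)\darrow$ and $\psi$ wins the race (in particular whenever $\delta(n,f(n))\uarrow$), then $\vph_{f(n)} = \vph_{\psi(n)}$, securing (\ref{totalize2}). The only troublesome outcome is that $\delta(n,f(n))\darrow$ and $\delta$ wins the race: then $\vph_{f(n)} = \vph_{\delta(n,f(n))}$, which breaks (\ref{totalize2}) if $\psi(n)\darrow$ and breaks (\ref{avoid2}) if $\psi(n)\uarrow$.

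The main obstacle is eliminating this troublesome outcome at every $n$, and this is where I expect the real work. In the unary ADN theorem one invokes FPF pointwise at the single index $f(n)$; here $\FPF^+$ is only existential in $n$, while the Proposition demands a universal conclusion. My plan to close the gap is to feed the computable function $f$ itself into the definition of $\FPF^+$, obtaining some $n_0$ with $\delta(n_0,f(n_0))\uarrow$ or $\vph_{f(n_0)} \neq \vph_{\delta(n_0,f(n_0))}$, and then to argue that the existence of even a single surviving troublesome coordinate allows one, via a second carefully chosen application of Theorem~\ref{recthmparam}, to manufacture a total computable $g$ with $\vph_{g(n)} = \vph_{\delta(n,g(n))}$ at every $n$, directly contradicting the negation of $\FPF^+$ recorded immediately after Definition~\ref{def}. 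Turning this sketch into a proof — that is, promoting the existential quantifier supplied by $\FPF^+$ to the universal quantifier demanded by (\ref{totalize2}) and (\ref{avoid2}) — is the delicate point of the argument.
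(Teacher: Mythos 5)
You are attempting to prove a statement that is in fact false: the paper's treatment of Proposition~\ref{ADNparam} is a refutation, not a proof (the author uses ``proposition'' only in the sense of ``mathematical statement'' and immediately afterwards proves that it fails). So the gap you flag at the end of your sketch cannot be closed by any amount of care --- it is precisely where the proposition breaks down. The asymmetry you correctly identified is the whole issue: $\FPF^+$ guarantees, for each computable $g$, only the \emph{existence} of one coordinate $n$ with $\delta(n,g(n))\uarrow$ or $\vph_{g(n)}\neq\vph_{\delta(n,g(n))}$, while \eqref{totalize2} and \eqref{avoid2} demand good behaviour at \emph{every} $n$. Your proposed bridge --- that a single surviving troublesome coordinate for some candidate $f$ would let you manufacture a total computable $g$ with $\vph_{g(n)}=\vph_{\delta(n,g(n))}$ for all $n$ --- has no real content behind it: one bad pair $(n_0,f(n_0))$ for one particular $f$ provides no uniform fixed-point finder for the whole family $\delta(n,\cdot)$, and Theorem~\ref{recthmparam} cannot be invoked for this purpose because $\delta(n,\cdot)$ is only partial. (Your race construction for $h$ and the first application of Theorem~\ref{recthmparam} are fine as far as they go; the failure is entirely in the quantifier promotion.)

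The paper's counterexample makes the obstruction concrete. Take $\psi$ universal, and for each candidate $f=\vph_e$ reserve a fresh witness $n$ on which $\psi$ is undefined; if $f(n)$ ever converges, define $\delta(n,f(n))\darrow$, which destroys \eqref{avoid2} for that $f$ since $\psi(n)\uarrow$. The function $\delta$ so built nevertheless remains $\FPF^+$: any $f$ satisfying \eqref{totalize2} for a universal $\psi$ must be nonconstant, so one can find a second point $m\neq n$ with $f(m)\darrow$ and arrange $\delta(m,f(m))\uarrow$, which witnesses the condition of Definition~\ref{def}; and if $f$ never produces such an $m$, it already fails \eqref{totalize2}. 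Thus the ``troublesome outcome'' of your case analysis is not an artifact of your construction of $h$ but is forced by a suitably built $\FPF^+$ function, and the correct conclusion --- the one the paper draws --- is that the ADN theorem, unlike the recursion theorem, has no parameterized version.
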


To show that this is the proper analog of Theorem~\ref{recthmparam}  
for the ADN theorem, 
we show that Proposition~\ref{ADNparam} both implies 
Theorem~\ref{recthmparam} and the ADN theorem. 
We then proceed to show that it is false (so we have used 
the term ``proposition'' here in the sense of ``mathematical 
statement'', not in the sense of ``theorem''). 

Proposition~\ref{ADNparam} implies Theorem~\ref{recthmparam}:
Note that for the proposition to hold, $\delta$ cannot be total 
(for then \eqref{avoid2} could not hold in case $\psi$ is nontotal). 
So if $\delta$ is total, it is not $\FPF^+$.
As already observed above, this means that there is a computable 
function $g$ such that for every~$n$, 
$\vph_{g(n)}=\vph_{\delta(n,g(n))}$, which is the statement 
of Theorem~\ref{recthmparam}.

Proposition~\ref{ADNparam} implies Theorem~\ref{ADN}:
Given a unary p.c.\ FPF function $\delta$, consider the
function defined as $\hat\delta(n,x) = \delta(x)$ for every 
$n$ and~$x$. 
Note that $\hat\delta$ is $\FPF^+$: For every computable 
function~$g$ and every~$n$, 
$\hat\delta(n,g(n)) = \delta(g(n))\uarrow$ or
$\vph_{g(n)}\neq \vph_{\delta(g(n))}$ since $\delta$ is FPF. 
Applying Proposition~\ref{ADNparam} to $\hat\delta$ gives, 
for a given p.c.\ $\psi$, a computable $f$ totalizing $\psi$
such that 
$$
\psi(n)\uarrow \; \Longrightarrow \; \hat\delta(n,f(n))\uarrow 
\; \Longrightarrow \; \delta(f(n))\uarrow 
$$
for every~$n$, hence Theorem~\ref{ADN} holds for~$\delta$.

\begin{theorem}
Proposition~\ref{ADNparam} is false. 
\end{theorem}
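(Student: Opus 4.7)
The plan is to take $\psi$ to be the everywhere-undefined partial computable function, so that \eqref{totalize2} becomes vacuous and \eqref{avoid2} demands $\delta(n,f(n))\uarrow$ for every $n$. I will construct a partial computable binary $\delta$ that is $\FPF^+$ but admits no total computable $f$ with $\delta(n,f(n))\uarrow$ for all $n$, which falsifies the proposition.

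The construction splits the first argument using a pairing function: view $n$ as $\la e, i\ra$ with $i\in\{0,1\}$. The two columns play complementary roles. On the column $i=0$, I leave $\delta(\la e,0\ra, x)\uarrow$ for every $x$; this column contributes nothing to $\delta$. On the column $i=1$, I use the $s$--$m$--$n$ theorem to define $\delta(\la e,1\ra, x)$ by simulating $\{e\}(\la e,1\ra)$, outputting $0$ if the computation converges to the value $x$, and looping otherwise. The resulting $\delta$ is partial computable by construction.

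The two verifications are then immediate. For $\FPF^+$: given any computable $g$, fix $e$ with $g=\{e\}$; at $n=\la e,0\ra$ we have $\delta(n,g(n))\uarrow$, which is the first disjunct in Definition~\ref{def}. For the failure of Proposition~\ref{ADNparam}: given any total computable $f=\{e\}$, let $n=\la e,1\ra$ and $k=f(n)$; then $\{e\}(n)$ converges to $k$, so $\delta(n,k)=0$ converges, while $\psi(n)\uarrow$, contradicting \eqref{avoid2}.

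The only real obstacle is recognizing why the construction has to separate the two roles into different columns. A naive attempt would try to handle both jobs at a single witness $n_e$, defining $\delta(n_e, f(n_e))$ to some index $m$ with $\vph_m \neq \vph_{f(n_e)}$; this would simultaneously defeat $f$ in the proposition and witness $\FPF^+$ against $f$. But producing such $m$ uniformly and computably in $f(n_e)$ is exactly asking for a computable fixed-point-free function, which is forbidden by the recursion theorem. Splitting into two columns bypasses this obstruction: on column $i=0$ we win $\FPF^+$ by sheer undefinedness, while on column $i=1$ we win the failure of \eqref{avoid2} with no regard for the fixed-point relation.
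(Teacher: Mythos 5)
Your proof is correct, and at its core it is the same diagonalization as the paper's: make $\delta$ defined at one point $(n,f(n))$ with $\psi(n)\uarrow$ so that \eqref{avoid2} fails, while securing $\FPF^+$ by keeping $\delta$ undefined at some other point $(m,g(m))$. The differences are in the implementation, and they work in your favour. The paper takes $\psi$ universal and uses clause \eqref{totalize2} to force any candidate $f$ to be nonconstant, which is what allows it to reserve a second witness $m$ at which $\delta(m,f(m))$ stays undefined; you instead pre-partition the first coordinate into two columns, one wholly undefined (so the $\FPF^+$ verification is a one-liner, constant $g$ or not) and one carrying the diagonalization $\delta(\la e,1\ra,x)=0$ exactly when $\{e\}(\la e,1\ra)$ converges with value $x$. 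Taking $\psi$ to be the empty function also isolates the source of the failure: the proposition already fails with \eqref{totalize2} vacuous, i.e.\ clause \eqref{avoid2} alone is responsible, whereas the paper's argument leans on \eqref{totalize2} to dispose of constant functions. Your closing remark about why the two roles cannot be combined at a single witness --- that producing $m$ with $\vph_m\neq\vph_{f(n)}$ uniformly in $f(n)$ would amount to a computable fixed point free function --- is exactly the right diagnosis, and matches the paper's observation that no total computable $\delta$ can be $\FPF^+$.
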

\begin{proof}
We construct $\delta$ p.c.\ and $\FPF^+$ and $\psi$ p.c.\ 
to diagonalize against all computable $f=\vph_e$,  
making sure that either \eqref{totalize2} or \eqref{avoid2} fails.

{\em Step 1}. Pick a fresh witness $n$ (i.e.\ a hitherto unused 
number) such that $\psi(n)$ is still undefined. 
To be total, $f$ has to converge on~$n$. If this never happens, 
we do not have to take any further action.   

{\em Step 2}. If $f(n)$ as in Step~1 becomes defined, let 
$\delta(n,f(n))\darrow$. This kills $f$ by making \eqref{avoid2} fail, 
but we still have to ensure that $\delta$ is $\FPF^+$. 
To meet the condition from Definition~\ref{def}, we need a 
number $m$ such that 
$\delta(m,f(m))\uarrow$ or $\vph_{f(m)}\neq \vph_{\delta(m,f(m))}$.
We can simply do this by picking $m\neq n$, waiting for 
$f(m)\darrow$, and letting $\delta(m,f(m))\uarrow$.
The only problem with this scenario is that $f$ could be a 
constant function. But we can use the property \eqref{totalize2} 
to {\em force\/} $f$ to be nonconstant, using that we are free to 
define~$\psi$. For example, if we take $\psi$ universal we 
know $f$ will have to be nonconstant to satisfy \eqref{totalize2}.
So then we can simply wait for $f$ to become defined on a number
$m\neq n$, and let $\delta(m,f(m))\uarrow$. If $f$ fails to do 
this, it fails to satisfy~\eqref{totalize2}.

Obviously, we can carry out steps~1 and 2 for every $f=\vph_e$
by simply picking different witnesses for each of them. 
This proves that for the $\delta$ and $\psi$ constructed, there 
does not exist $f$ as in Proposition~\ref{ADNparam}.
\end{proof}

\end{document}